\newtheorem{prop}{Proposition}
\newtheorem{theorem}[prop]{Theorem}
\newtheorem{corollary}[prop]{Corollary}
\theoremstyle{definition}
\newtheorem{definition}[prop]{Definition}
\newtheorem{example}[prop]{Example}
\newcommand{\s}{\textnormal{s}}
\newcommand{\cs}{\textnormal{cs}}
\newcommand{\seqnum}[1]{\href{https://oeis.org/#1}{\rm \underline{#1}}}
\newcommand{\mylabel}[2]{#2\def\@currentlabel{#2}\label{#1}}
\newcommand{\vast}{\bBigg@{4}}
\newcommand{\Vast}{\bBigg@{5}}
\begin{document}
\tikzset{mystyle/.style={matrix of nodes,
        nodes in empty cells,
        row 1/.style={nodes={draw=none}},
        row sep=-\pgflinewidth,
        column sep=-\pgflinewidth,
        nodes={draw,minimum width=1cm,minimum height=1cmanchor=center}}}
\tikzset{mystyleb/.style={matrix of nodes,
        nodes in empty cells,
        row sep=-\pgflinewidth,
        column sep=-\pgflinewidth,
        nodes={draw,minimum width=1cm,minimum height=1cmanchor=center}}}

\title{Further results on staircase (cyclic) words}

\author[SELA FRIED]{Sela Fried$^{\dagger}$}
\thanks{$^{\dagger}$ Department of Computer Science, Israel Academic College,
52275 Ramat Gan, Israel.
\\
\href{mailto:friedsela@gmail.com}{\tt friedsela@gmail.com}}

\begin{abstract}
We find the two-variables generating function for the statistic which counts the number of variations in a word bounded by $1$. Thus, we refine and extend previous results concerning staircase words, which are words in which the variation between all consecutive letters is bounded by $1$. We obtain the analogue results for cyclic words.
\bigskip

\noindent \textbf{Keywords:} Chebyshev polynomial, generating function, Hertzsprung, staircase word, staircase cyclic word, Sherman-Morrison formula.
\smallskip

\noindent
\textbf{Math.~Subj.~Class.:} 68R05, 05A05, 05A15.
\end{abstract}

\maketitle

\section{Introduction}
Let $n\geq 0$ and $k\geq 2$ be two integers. We denote by $[k]$ the set $\{1,2,\ldots,k\}$. A word over $k$ of length $n$ is merely an element of $[k]^n$. A staircase word is a word  such that $|w_{i+1}-w_i|\leq 1$, for every $1\leq i\leq n-1$. If, in addition, $|w_n-w_1|\leq 1$, then $w$ is said to be a cyclic staircase word. 

The study of (cyclic) staircase words seems to have begun with the work of Knopfmacher et al.\ \cite{K}. Generalizations and refinements soon followed, e.g., \cite{FM, Kit, Ka, MRV, Sh}. 

In order to state our results and see how they complement past ones, let us  define the following two statistics: For a word $w=w_1\cdots w_n\in[k]^n$ we set 
\begin{align}
\s(w)&=\{1\leq i\leq n-1\;:\;|w_{i+1}-w_i|\leq 1\},\nonumber\\
\cs(w)&=\{0\leq i\leq n-1\;:\;|w_{i+1}-w_i|\leq 1\},\nonumber
\end{align} 
where by $w_0$ we mean $w_n$. For example, if $w=423353$ then $\s(w)=2$ and $\cs(w)=3$. Notice that for a word $w\in[k]^n$ being staircase (resp., cyclic staircase) is equivalent to $\s(w)=n-1$ (resp., $\cs(w)=n$).

Let $f_{n,k}(t)$ and $g_{n,k}(t)$ denote the distribution of $\s(\cdot)$ and $\cs(\cdot)$ on $[k]^n$, respectively. In this work we establish the two-variable generating functions of the numbers $f_{n,k}(t)$ and $g_{n,k}(t)$, which we denote by $F_k(x,t)$ and $G_k(x,t)$, respectively, and express them in terms of the Chebyshev polynomials. Doing so, we achieve several goals:
\begin{enumerate}
\item We refine the results obtained by Knopfmacher et al.\ \cite{K}, who established the one-variable generating functions of the number of staircase words and cyclic staircase words. 
\item Kitaev and Remmel \cite{Kit} studied $F(x,t)$ but did not obtain a closed-form formula. 
They expressed the generating function as a sum over the entries of the inverse of a certain matrix. Furthermore, they did not study at all cyclic words. We evaluate this expression obtain the analogue statement for cyclic words.
\item In order to obtain an explicit expression of $G(x,t)$, we evaluated in a closed form dozens of sums of products of Chebyshev polynomials. These seem to be of an independent interest.
\end{enumerate}

\section{Preliminaries}

The Chebyshev polynomials are two sequences of polynomials related to the cosine and sine functions, which find extensive use in approximation theory. They also emerge naturally in combinatorics. The Chebyshev polynomials of the first kind, denoted by
$T_n(x)$, are defined by 
$T_{n}(\cos \theta )=\cos(n\theta)$, and satisfy the recursion 
\begin{align}
    T_0(x)&=1\nonumber\\
    T_1(x)&=x\nonumber\\
    T_{n+1}(x)&=2xT_n(x)-T_{n-1}(x).\nonumber
\end{align}
The Chebyshev polynomials of the second kind, denoted by $U_n(x)$, are defined by 
$U_{n}(\cos\theta )\sin\theta=\sin((n+1)\theta)$, and satisfy the recursion 
\begin{align}
    U_0(x)&=1\nonumber\\
    U_1(x)&=2x\nonumber\\
    U_{n+1}(x)&=2xU_n(x)-U_{n-1}(x).\label{u1}
\end{align}
We make extensive use of well-known Chebyshev polynomials identities, for example
$T_{n}(x)=(U_{n}(x)-U_{n-2}(x))/2$  and $U_{nm-1}(x)=U_{m-1}(T_{n}(x))U_{n-1}(x)$ (\cite[Exercise 1.2.15 (a) and (e)]{R}). A particularly useful identity is stated in \cite[(2.4)]{K}:
\[U_{i}(x)U_{j}(x)=\frac{U_{i-j}(x)-xU_{i-j-1}(x)-U_{i+j+2}(x)+xU_{i+j+1}(x)}{2(1-x^{2})}.\] In order to simplify the generating functions we shall need formulas for sums of products of Chebyshev polynomials. Except for $q_0(k,x)$ and $q_2(k,x)$ (\cite[(2.2)]{K} and \cite[p.~45]{CF}, respectively), we could not find any of them in the literature. Let us set \[Z_k(x) = \frac{U_k(x)-U_{k-1}(x)-1}{2(x-1)}.\] We have
\begin{align}
q_0(k,x) & = \sum_{i=1}^{k}U_{i}(x)=\frac{U_{k+1}(x)-U_{k}(x)-1}{2(x-1)}-1\nonumber\\
q_1(k,x) & =\sum_{i=1}^{k}U_{i}^{2}(x)=\frac{U_{2}(x)(U_{2k}(x)-1)-2xU_{2k-1}(x)-2k}{4(x^{2}-1)}\nonumber\\
q_2(k,x) & =\sum_{i=1}^{k}U_{i}(x)U_{k-i}(x)=\frac{kT_{2}(x)U_{k}(x)-(k+1)xU_{k-1}(x)}{2(x^{2}-1)}\nonumber\\
q_3(k,x) & =\sum_{i=1}^{k}U_{i-1}(x)U_{i}(x)=\frac{2xU_{2k}(x)-U_{2k-1}(x)-2(k+1)x}{4(x^{2}-1)}\nonumber\\
q_4(k,x) & =\sum_{i=1}^{k}U_{i-1}(x)U_{k-i}(x)=\frac{kxU_{k}(x)-(k+1)U_{k-1}(x)}{2(x^{2}-1)}    \nonumber\\
q_5(k,x)&=\sum_{i=1}^{k}U_{k-1-i}(x)U_{i-1}(x)=q_{4}(k-1,x)\nonumber\\ 
q_6(k,x)&=\sum_{i=1}^{k}U_{k-1-i}(x)U_{k-i}(x)=q_{3}(k-1,x)\nonumber\\
q_7(k,x)&=\sum_{i=1}^{k}U_{k-i}(x)U_{i}(x)U_{i-1}(x)&\nonumber\\&=\frac{2T_{k}(x)U_{2}(x)Z_{k}(T_{3}(x))+2(1-2T_{k+2}(x))Z_{k-1}(x)}{8(1-x^{2})}&\nonumber\\ &-\frac{4T_{k+2}(x)U_{k-1}(x)+U_{k-1}(x)(Z_{k-1}(T_{3}(x))-Z_{k+1}(T_{3}(x))+1)}{8(1-x^{2})}&\nonumber\\
q_8(k,x)&=\sum_{i=1}^{k}U_{i}(x)U_{k-i-2}(x)U_{i-1}(x)=q_{7}(k-2,x)-\frac{xU_{2k}(x)-U_{2k-1}(x)-x}{2(x^{2}-1)}\nonumber\\
q_9(k,x)&=\sum_{i=1}^{k}U_{k-i}(x)U_{k-i-1}(x)U_{i-1}(x)=q_{7}(k-1,x)\nonumber\\
q_{10}(k,x)&=\sum_{i=1}^{k}U_{k-1-i}(x)U_{k-i}(x)U_{i}(x)=q_{7}(k,x)-\frac{xU_{2k}(x)-U_{2k-1}(x)-x}{2(x^{2}-1)}\nonumber\\
q_{11}(k,x)&=\sum_{i=1}^{k}U_{k-1-i}(x)U_{i-1}(x)U_{i}(x)=q_{7}(k-1,x)\nonumber\\
q_{12}(k,x)&=\sum_{i=1}^{k}U_{k-i}^{2}(x)U_{i}(x)\nonumber\\&=\frac{(2+T_{2k+4}(x))Z_{k+2}(x)-T_{2k+4}(x)U_{2}(x)Z_{k+2}(T_{3}(x))}{4(1-x^{2})}-U_{k}^{2}(x)\nonumber\\&+\frac{U_{2k+3}(x)(Z_{k+3}(T_{3}(x))-Z_{k+1}(T_{3}(x))+Z_{k+1}(x)-Z_{k+3}(x))}{8(1-x^{2})}\nonumber\\
q_{13}(k,x)&=\sum_{i=1}^{k}U_{i}^{2}(x)U_{k-i}(x)=q_{12}(k,x)+\frac{T_{2}(x)U_{2k}(x)-xU_{2k-1}(x)-2(x^{2}-1)U_{k}(x)-1}{2(x^{2}-1)}\nonumber\\
q_{14}(k,x)&=\sum_{i=1}^{k}U_{i-1}^{2}(x)U_{k-i-1}(x)=q_{12}(k-2,x)+\frac{T_{2}(x)U_{2k}(x)-T_{3}(x)U_{2k-1}(x)-1}{2(x^{2}-1)}\nonumber\\
q_{15}(k,x)&=\sum_{i=1}^{k}U_{i-1}(x)U_{k-i-1}^{2}(x)=q_{14}(k,x)\nonumber\\
q_{16}(k,x)&=\sum_{i=1}^{k}U_{i}^{2}(x)U_{k-i}^{2}(x)\nonumber\\&=\frac{x((16kx^{4}-8(2k+1)x^{2}+2k)U_{2k}(x)+8x^{2}+4k)}{16x(x^{2}-1)^{2}}-\frac{(8kx^{4}-6(k+1)x^{2}+1)U_{2k-1}(x)}{16x(x^{2}-1)^{2}}\nonumber\\
q_{17}(k,x)&=\sum_{i=1}^{k}U_{i-1}^{2}(x)U_{k-1-i}^{2}(x)=q_{16}(k-2,x)+\frac{T_{2}(x)U_{2k}(x)-T_{3}(x)U_{2k-1}(x)-1}{2(x^{2}-1)}\nonumber\\
q_{18}(k,x)&=\sum_{i=1}^{k}U_{i}(x)U_{i-1}(x)U_{k-i}(x)U_{k-i-1}(x)\nonumber\\&=\frac{\left(4(k-1)x^{2}-2k+3\right)U_{2k}(x)-(2k-1)xU_{2k-1}(x)+4(k+1)x^{2}}{16(x^{2}-1)^{2}}\nonumber\\
&-\frac{Z_{k+1}(T_{2}(x))-Z_{k-1}(T_{2}(x))}{4(x^{2}-1)^{2}}+\frac{T_{2k}(x)(Z_{k+1}(T_{4}(x))-Z_{k-1}(T_{4}(x)))}{16(x^{2}-1)^{2}}\nonumber\\
&+\frac{(1-T_{2}^{2}(x))T_{2}(x)U_{k-1}(T_{2}(x))Z_{k}(T_{4}(x))}{4(x^{2}-1)^{2}}\nonumber\\
q_{19}(k,x)&=\sum_{i=1}^{k}U_{i}(x)U_{k-i-2}(x)U_{i-1}(x)U_{k-i-1}(x)=q_{18}(k-1,x)\nonumber\\
q_{20}(k,x)&=\sum_{i=1}^{k}U_{i}(x)U_{k-i-2}(x)U_{k-i}(x)\nonumber\\&=U_{2}(x)q_{12}(k-2,x)-2xq_{7}(k-2,x)\nonumber\\&+\frac{xT_{3}(x)U_{2k}(x)-xT_{4}(x)U_{2k-1}(x)-(x^{2}-1)U_{k}(x)-x^{2}}{x^{2}-1}\nonumber\\
q_{21}(k,x)&=\sum_{i=1}^{k}U_{i}^{2}(x)U_{k-i-2}(x)U_{k-i}(x)\nonumber\\&=\frac{(16x^{4}-2\left(k+8\right)x^{2}+1)U_{2k-1}(x)}{16x(x^{2}-1)^{2}}\nonumber\\
&+\frac{x(2(-8x^{4}+2(k+3)x^{2}-k)U_{2k}(x)+2(2k+1)T_{2}(x)+2T_{4}(x))}{16x(x^{2}-1)^{2}}\nonumber\\
q_{22}(k,x)&=\sum_{i=1}^{k}U_{k-1-i}^{2}(x)U_{i+1}(x)U_{i-1}(x)=q_{21}(k,x)+\frac{2x^{2}U_{2k}(x)-2xU_{2k-1}(x)+T_{2}(x)-U_{2}(x)}{2(x^{2}-1)}\nonumber\\
q_{23}(k,x)&=\sum_{i=1}^{k}U_{k-1-i}(x)U_{i+1}(x)U_{k-i}(x)\nonumber\\&=q_{7}(k+1,x)+\frac{(T_{2}(x)+2x)U_{2k-1}(x)-(T_{3}(x)+2x^{2})U_{2k}(x)+2x^{2}+x}{2(x^{2}-1)}\nonumber\\
q_{24}(k,x)&=\sum_{i=1}^{k}U_{k-1-i}(x)U_{i+1}(x)U_{i-1}(x)\nonumber\\&=q_{12}(k-2,x)-2xq_{7}(k-2,x)\nonumber\\&+\frac{(-16x^{5}+16x^{3}-3x)U_{2k-1}(x)+(8x^{4}-6x^{2}+1)U_{2k}(x)-U_{2}(x)}{2(x^{2}-1)}\nonumber\\
q_{25}(k,x)&=\sum_{ii=1}^{k}U_{i-1-m}(x)U_{i+1}(x)U_{k-i}(x)U_{m}(x)=q_{18}(k+1,x)-\frac{x(xU_{2k}(x)-U_{2k-1}(x)-x)}{x^{2}-1}\nonumber\\
q_{26}(k,x)&=\sum_{i=1}^{k}U_{i-1}^{2}(x)=q_{1}(k-1,x)+1\nonumber\\
q_{27}(k,x)&=\sum_{i=1}^{k}U_{k-i}^{2}(x)=q_{1}(k-1,x)+1\nonumber\\
q_{28}(k,x)&=\sum_{i=1}^{k}U_{i-1}(x)U_{k-i-2}(x)=q_{4}(k-2,x)-U_{k-1}(x)\nonumber\\
q_{29}(k,x)&=\sum_{i=1}^{k}U_{k-3-i}(x)U_{i+1}(x)U_{i-1}(x)\nonumber\\&=U_{2}(x)q_{12}(k-4,x)-2xq_{7}(k-4)\nonumber\\&-\frac{x\left(128x^{8}-64x^{6}+168x^{4}-42x^{2}-x+3\right)U_{2k-1}(x)}{x^{2}-1}\nonumber\\&+\frac{x\left(\left(-64x^{7}+112x^{5}-60x^{3}+2x^{2}+11x-1\right)U_{2k}(x)+2x^{2}+x-U_{2}(x)\right)}{x^{2}-1}\nonumber\\
q_{30}(k,x)&=\sum_{i=1}^{k}U_{i+1}(x)U_{i}(x)U_{k-i}(x)=q_{7}(k+1,x)-2xU_{k}(x)\nonumber\\
q_{31}(k,x)&=\sum_{i=1}^{k}U_{k-i}(x)U_{i+1}(x)=q_{2}(k+1,x)-2xU_{k}(x)\nonumber\\
q_{32}(k,x)&=\sum_{i=1}^{k}U_{k-i}(x)U_{k-i-2}(x)U_{i+2}(x)\nonumber\\&=q_{20}(k+2,x)\nonumber\\&+\frac{(2x^{2}+xU_{2}(x))U_{2k-1}(x)+(-4x^{3}+2x-U_{2}(x))U_{2k}(x)-(2x+U_{2}(x))(2x^{2}-U_{2}(x))}{2(x^{2}-1)}\nonumber\\
q_{33}(k,x)&=\sum_{i=1}^{k}U_{i-1}(x)U_{k-i-2}(x)U_{k-i-1}(x)=q_{7}(k-2,x)\nonumber\\
q_{34}(k,x)&=\sum_{i=1}^{k}U_{i-1}(x)U_{i+1}(x)=\frac{U_{2}(x)U_{2k}(x)-2xU_{2k-1}(x)-2\left(k+1\right)T_{2}(x)-1}{4(x^{2}-1)}\nonumber\\
q_{35}(k,x)&=\sum_{i=1}^{k}U_{k-i}(x)U_{k-i-2}(x)=q_{34}(k-2,x)-1\nonumber\\
q_{36}(k,x)&=\sum_{i=1}^{k}U_{k-i}(x)U_{i}(x)U_{k-i-2}(x)U_{i+2}(x)\nonumber\\&\hspace{-30pt}=\frac{(64x^{6}-8(k+11)x^{4}+6(k+5)x^{2}-1)U_{2k-1}(x)}{16x(x^{2}-1)^{2}}\nonumber\\&\hspace{-30pt}+	
\frac{x\left(\left(-72x^{6}+(16k+90)x^{4}-\left(16k+24\right)x^{2}+2k\right)U_{2k}(x)+2(k+1)T\left(4,x\right)+4x^{2}+2k+2T_{6}(x)\right)}{16x(x^{2}-1)^{2}}	\nonumber\\
q_{37}(k,x)&=\sum_{i=1}^{k}U_{i-1}i(x)U_{k-i-1}(x)U_{k-i-3}(x)U_{i+1}(x)\nonumber\\&=q_{36}(k-2,x)-\frac{U_{2}(x)(T_{5}(x)U_{2k-1}(x)-T_{4}(x)U_{2k}(x)+U_{2}(x)-2x^{2})}{2(x^{2}-1)}\nonumber\\
q_{38}(k,x)&=\sum_{i=1}^{k}U_{i-1}(x)U_{k-i-2}(x)U_{i+2}(x)\nonumber\\&=2xq_{24}(k-1,x)-q_{23}(k-3,x)+\frac{(64x^{7}-96x^{5}+8x^{4}+36x^{3}-6x^{2}-2x)U_{2k-1}(x)}{2(x^{2}-1)}\nonumber\\&+	
\frac{(-32x^{6}+40x^{4}-8x^{3}-10x^{2}+6x)U_{2k}(x)+2x^{2}-xU_{2}(x)+U_{3}(x)+x}{2(x^{2}-1)}	\nonumber\\
q_{39}(k,x)&=\sum_{i=1}^{k}U_{i+1}(x)U_{k-i}(x)U_{k-i-3}(x)\nonumber\\&=2xU_{2}(x)q_{12}(k-2,x)-4x^{2}q_{7}(k-2,x)-q_{7}(k-1,x)\nonumber\\&+\frac{(16x^{5}-12x^{3}-2x^{2}+x)U_{2k}(x)+(-32x^{6}+32x^{4}+4x^{3}-6x^{2}-2x+1)U_{2k-1}(x)}{2(x^{2}-1)}	
\nonumber\\&+\frac{(-8x^{4}-4x^{3}+8x^{2}+4x)U_{k}(x)+(4x^{3}-4x)U_{k-1}(x)+(2x+1)U_{3}(x)-(2x^{2}+3x)U_{2}(x)}{2(x^{2}-1)}	\nonumber\\
q_{40}(k,x)&=\sum_{i=1}^{k}U_{k-i}(x)U_{i}(x)U_{i+1}(x)U_{k-i-3}(x)\nonumber\\&=\frac{(-1+64x^{6}-88x^{4}-2(k-13)x^{2})U_{2k-1}(x)}{16x^{5}-32x^{3}+16x}	
\nonumber\\&+\frac{x((-64x^{6}+80x^{4}+\left(4k-20\right)x^{2}-2k)U_{2k}(x)+2(k+1)T_{4}(x)+2kT_{2}(x)+2T_{6}(x))}{16x^{5}-32x^{3}+16x}	\nonumber\\
q_{41}(k,x)&=\sum_{i=1}^{k}U_{i-1}(x)U_{k-i-1}(x)U_{k-i-2}(x)U_{i+2}(x)\nonumber\\&=q_{40}(k,x)+\frac{x((4x^{3}-x)U_{2k}(x)-U_{2}(x)U_{2k-1}(x)+xU_{2}(x)-U_{3}(x)-2x)}{x^{2}-1}\nonumber
\end{align}

Finally, let us introduce some notation. If $v$ is a vector, we denote by $v^T$ the transpose of $v$. We denote by $\mathbf{1}=(1,\ldots,1)^T$ the all $1$s vector of length $k$. If $p$ is a condition, the indicator function $\mathds{1}_p$ is equal to $1$ if $p$  holds true and $0$ otherwise. If $M$ is a matrix, we write $(M)_{ij}$ for the $ij$th entry of $M$.

\section{Main results}

In the following theorem we calculate $F(x,t)$. While it was already calculated in \cite[Theorem 4]{Kit}, we obtain an elegant closed form formula for $F(x,t)$ in terms of the Chebyshev polynomials. 

\begin{theorem}\label{t1}
Let $\phi = (1-x(t-1))/(2x(t-1))$. Then
$F(x,t) = 1/(1-x\gamma(x,t))$, where \[\gamma(x,t) = \frac{k}{1-3x(t-1)}-\frac{2x(t-1)}{(1-3x(t-1))^{2}}\frac{U_{k}(\phi)-U_{k-1}(\phi)-1}{U_{k}(\phi)}.\]
\end{theorem}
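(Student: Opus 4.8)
The plan is to set up a transfer-matrix (cluster-type) recursion for the generating function $F(x,t) = \sum_{n,k} f_{n,k}(t) x^n$ and then to evaluate the resulting matrix inverse explicitly using Chebyshev polynomials. Concretely, I would track words letter by letter: let $F_j(x,t)$ be the generating function for words whose last letter is $j \in [k]$, weighted by $x^{\text{length}}$ and by $t$ raised to the number of "staircase steps" $|w_{i+1}-w_i| \le 1$ accrued so far. Appending a letter $\ell$ after a word ending in $j$ multiplies by $x$ and by $t$ if $|\ell - j| \le 1$, and by $x$ alone otherwise. This gives $F_j = x + x\sum_{\ell} t^{\mathds{1}_{|\ell-j|\le 1}} F_\ell$, i.e. in vector form $\mathbf{F} = x\mathbf{1} + xM\mathbf{F}$ where $M = (t-1)A + tJ'$... more cleanly, $M_{j\ell} = 1 + (t-1)\mathds{1}_{|j-\ell|\le 1}$, so $M = J + (t-1)B$ with $J$ the all-ones matrix and $B$ the adjacency matrix of the path $P_k$ with loops (the tridiagonal $0/1$ matrix with $1$s on the three central diagonals). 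Then $F(x,t) = \mathbf{1}^T\mathbf{F} = x\,\mathbf{1}^T(I - xM)^{-1}\mathbf{1}$, matching the setup attributed to Kitaev–Remmel in \cite{Kit}.

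The heart of the argument is inverting $I - xM = I - xJ - x(t-1)B$. The plan is to first handle the tridiagonal part: $I - x(t-1)B$ is, up to scaling, a tridiagonal Toeplitz-like matrix whose inverse entries are classically expressible through Chebyshev polynomials of the second kind $U_n$ evaluated at the argument $\phi = (1 - x(t-1))/(2x(t-1))$ — this is where $\phi$ comes from, since $I - x(t-1)B$ has $1 - x(t-1)$ on the diagonal and $-x(t-1)$ off-diagonal, and dividing by $-x(t-1)$ produces a matrix $2\phi I - (\text{path adjacency})$ whose determinant is $U_k(\phi)$ and whose inverse has entries $U_{i-1}(\phi)U_{k-j}(\phi)/U_k(\phi)$ for $i \le j$. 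Then I would incorporate the rank-one correction $-xJ = -x\mathbf{1}\mathbf{1}^T$ via the Sherman–Morrison formula (flagged in the abstract): writing $C = I - x(t-1)B$, we get
\[
\mathbf{1}^T (C - x\mathbf{1}\mathbf{1}^T)^{-1}\mathbf{1} = \frac{\mathbf{1}^T C^{-1}\mathbf{1}}{1 - x\,\mathbf{1}^T C^{-1}\mathbf{1}},
\]
so everything reduces to computing the single scalar $S := \mathbf{1}^T C^{-1}\mathbf{1} = \sum_{i,j}(C^{-1})_{ij}$. Then $F(x,t) = xS/(1 - xS) = 1/(1 - x \cdot (S/(\text{stuff})))$... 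I would match this against the claimed form $F = 1/(1 - x\gamma)$, which forces $\gamma = $ (a normalization of) $S$; comparing, $\gamma(x,t)$ should equal $(t-1)$ times the double sum of inverse entries of the path matrix, or equivalently $S$ after pulling out the factor from the $C$-scaling.

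The remaining task is the closed-form evaluation of $S = \sum_{i,j}(C^{-1})_{ij}$. Using $(C^{-1})_{ij} = \frac{1}{-x(t-1)}\cdot\frac{U_{\min(i,j)-1}(\phi)U_{k-\max(i,j)}(\phi)}{U_k(\phi)}$, the double sum splits (by symmetry) into the diagonal sum $\sum_i U_{i-1}(\phi)U_{k-i}(\phi)$ and twice the sum $\sum_{i<j}U_{i-1}(\phi)U_{k-j}(\phi) = \sum_i U_{i-1}(\phi)\big(\sum_{j>i}U_{k-j}(\phi)\big)$. These are precisely the kinds of sums $q_4$ (namely $\sum_i U_{i-1}U_{k-i}$) and iterated partial sums of $U$'s (telescoping via $\sum U_m = (U_{k+1}-U_k-1)/(2(x-1))$, i.e. $q_0$ and $Z_k$) catalogued in the preliminaries; chaining them should collapse $S$ to a rational function of $U_k(\phi)$ and $U_{k-1}(\phi)$. **The main obstacle** I anticipate is purely computational bookkeeping: carefully managing the $\min$/$\max$ split, the boundary indices ($U_{-1} = 0$, $U_0 = 1$), and the substitution $x = $ function of $\phi$ so that the messy combination of geometric-type Chebyshev sums simplifies to exactly the stated $\gamma(x,t)$ — in particular recognizing that the factors of $1 - 3x(t-1)$ in the denominator arise naturally. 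A secondary subtlety is verifying the Sherman–Morrison denominator $1 - xS$ does not vanish identically and that the rational-function identity, once established for $\phi$ in a range where $U_k(\phi) \ne 0$, extends formally as an identity of power series in $x$; I would finish by a direct cross-multiplication check that $(1 - 3x(t-1))^2 U_k(\phi)(1 - x\gamma) = $ the expected polynomial, or alternatively by matching low-order coefficients against the recursion for $f_{n,k}(t)$ and invoking uniqueness.
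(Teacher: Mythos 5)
Your proposal follows essentially the same route as the paper: the same linear system indexed by the boundary letter, the same splitting of $I-xM$ into the tridiagonal matrix $C(x(t-1))$ (inverted via Chebyshev polynomials at $\phi$) plus the rank-one perturbation $-x\mathbf{1}\mathbf{1}^T$, Sherman--Morrison, and the reduction to the scalar $\gamma(x,t)=\mathbf{1}^TC(x(t-1))^{-1}\mathbf{1}$ evaluated through the sums $q_0$ and $q_4$. The only slip is the omitted empty-word contribution: the correct identity is $F=1+x\mathbf{1}^T(I-xM)^{-1}\mathbf{1}$, and it is precisely this added $1$ that turns your $x\gamma/(1-x\gamma)$ into the stated $1/(1-x\gamma)$.
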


\begin{proof}

For $i\in[k]$ let $f_i(x,t)$ be the analogue of $F(x,t)$ for words whose first letter is $i$. We set $f(x,t)=(f_{1}(x,t),\ldots,f_{k}(x,t))^T$. We have
\begin{equation}\label{7jv}
f_i(x,t)=x+x\sum_{j=1}^{k}t^{\mathds{1}_{|i-j|\leq1 }}f_j(x,t).
\end{equation}
In matrix form, \eqref{7jv} may be written as $A(x,t)f(x,t)=x\mathbf{1}$, where $A(x,t)$ is the square matrix of size $k$ given by \begin{align}
A(x,t)&=
\begin{pmatrix}
1-tx& -tx& -x& \cdots& -x\\  
-tx& 1-tx& -tx& \ddots& \vdots \\  
-x& \ddots& \ddots& \ddots& -x  \\
\vdots& \ddots& -tx& 1-tx& -tx  \\
-x& \cdots& -x& -tx& 1-tx
\end{pmatrix}.\nonumber
\end{align} 
Since $F(x,t) = \sum_{i=1}^kf_i(x,t) = \mathbf{1}^Tf(x,t)$, we have $F(x,t)=1+x\mathbf{1}^TA(x,t)^{-1}\mathbf{1}$ (this is where the calculation in \cite[Theorem 4]{Kit} terminates). Now, we notice that $A(x,t)$ may be written as
$A(x,t)=C(x(t-1))+(-x)\mathbf{1}\mathbf{1}^T$, where $C(x)$ is the square matrix of size $k$ given by
\begin{align}
C(x)&=\begin{pmatrix}
1-x& -x& 0& \cdots& 0\\  
-x& 1-x& -x& \ddots& \vdots \\  
0& \ddots& \ddots& \ddots& 0  \\
\vdots& \ddots& -x& 1-x& -x  \\
0& \cdots& 0& -x& 1-x
\end{pmatrix}.\nonumber
\end{align}
The matrix $C(x)$ was analyzed by Knopfmacher et al.\ \cite{K}, who showed that its inverse is given by
\[\left(C(x)^{-1}\right)_{ij}=\frac{1}{xU_{k}\left(\frac{1-x}{2x}\right)}\begin{cases}
U_{i-1}\left(\frac{1-x}{2x}\right)U_{k-j}\left(\frac{1-x}{2x}\right), & \text{if \ensuremath{i\leq j}};\\
U_{j-1}\left(\frac{1-x}{2x}\right)U_{k-i}\left(\frac{1-x}{2x}\right), & \text{otherwise}.
\end{cases}\]
By the Sherman–Morrison formula \cite{B}, we have
\begin{equation}\label{e3}
A(x,t)^{-1} = C(x(t-1))^{-1} + \frac{xC(x(t-1))^{-1}\mathbf{1}\mathbf{1}^TC(x(t-1))^{-1}}{1-x\mathbf{1}^TC(x(t-1))^{-1}\mathbf{1}}.
\end{equation} We claim that \begin{equation}\label{e4}
\mathbf{1}^{T}C(x(t-1))^{-1}\mathbf{1}=\gamma(x,t).\end{equation} Indeed,
\begin{align}
&\mathbf{1}^{T}C(x(t-1))^{-1}\mathbf{1}\nonumber\\
&=\sum_{i=1}^{k}\sum_{j=1}^{k}(C(x(t-1))^{-1})_{ij}\nonumber\\
&=\frac{1}{x(t-1)U_{k}(\phi)}\left(\sum_{i=1}^{k}U_{i-1}(\phi)\sum_{j=i}^{k}U_{k-j}(\phi)+\sum_{i=2}^{k}U_{k-i}(\phi)\sum_{j=1}^{i-1}U_{j-1}(\phi)\right)\nonumber\\
&=\frac{1}{x(t-1)U_{k}(\phi)}\left(\sum_{i=1}^{k}U_{i-1}(\phi)(q_{0}(k-i,\phi)+1)+\sum_{i=2}^{k}U_{k-i}(\phi)(q_{0}(i-2,\phi)+1)\right)\nonumber\\
&=\frac{1}{2x(t-1)(\phi-1)U_{k}(\phi)}\Bigg(\sum_{i=1}^{k}\left(U_{i-1}(\phi)U_{k-i+1}(\phi)-U_{i-1}(\phi)U_{k-i}(\phi)-U_{i-1}(\phi)\right)\nonumber\\&\hspace{128pt}+\sum_{i=2}^{k}\left(U_{k-i}(\phi)U_{i-1}(\phi)-U_{k-i}(\phi)U_{i-2}(\phi)-U_{k-i}(\phi)\right)\Bigg)\nonumber\\
&=\frac{1}{2x(t-1)(\phi-1)U_{k}(\phi)}\Bigg((q_{4}(k+1,\phi)-q_{4}(k-1,\phi)-q_{0}(k-1,\phi)\nonumber\\&\hspace{130pt}-q_{0}(k-2,\phi)-U_{k}(\phi)-U_{k-1}(\phi)-2\Bigg)\nonumber\\
&=\frac{1}{4x(t-1)(\phi-1)^{2}(\phi+1)U_{k}(\phi)}\Bigg((2(1-\phi^{2})-(k-1)\phi)U_{k-1}(\phi)+(\phi+1+k)U_{k-2}(\phi)\nonumber\\&\hspace{150pt}+(k+1)\phi U_{k+1}(\phi)-(2\phi^{2}+\phi+1+k)U_{k}(\phi)+2(\phi+1)\Bigg).\nonumber
\end{align}
Using \eqref{u1}, we replace $U_{k-2}(\phi)$ with $2\phi U_{k-1}(\phi)-U_{k}(\phi)$ and $U_{k+1}(\phi)$ with $2\phi U_{k}(\phi)-U_{k-1}(\phi)$, and the assertion follows with some algebra.
Set
\[\alpha_{i}(x)=\frac{U_{k-i}(x)(U_i(x)-1)-U_{i-1}(x)\left(U_{k-i-1}(x)+1\right)}{U_k(x)}.\] We claim that 
\begin{equation}\label{e5}\left(C(x(t-1))^{-1}\boldsymbol{1}\boldsymbol{1}^TC(x(t-1))^{-1}\right)_{ij}=\frac{\alpha_i(\phi)\alpha_j(\phi)}{(1-3x(t-1))^{2}}.\end{equation} Indeed,
\begin{align}
(C(x(t-1))^{-1}\boldsymbol{1})_{i}&=\sum_{s=1}^{k}\left(C(x(t-1))^{-1}\right)_{is}\nonumber\\
&=\frac{1}{x(t-1)U_{k}(\phi)}\left(U_{k-i}(\phi)\sum_{s=1}^{i-1}U_{s-1}(\phi)+U_{i-1}(\phi)\sum_{s=i}^{k}U_{k-s}(\phi)\right)\nonumber\\
&=\frac{1}{x(t-1)U_{k}(\phi)}\left(U_{k-i}(\phi)(q_{0}(i-2,\phi)+1)+U_{i-1}(\phi)(q_{0}(k-i,\phi)+1)\right)\nonumber\\
&=\frac{1}{2(\phi-1)x(t-1)U_{k}(\phi)}\left(U_{i-1}(\phi)(U_{k-i+1}(\phi)-1)-U_{k-i}(\phi)(U_{i-2}(\phi)+1)\right)\nonumber
\end{align}
Using \eqref{u1}, we replace $U_{k-i+1}(\phi)$ with $2\phi U_{k-i}(\phi)-U_{k-i-1}(\phi)$ and $U_{i-2}(\phi)$ with $2\phi U_{i-1}(\phi)-U_{i}(\phi)$. This gives us\[(C(x(t-1))^{-1}\boldsymbol{1})_{i}=\frac{\alpha_i(\phi)}{1-3x(t-1)}.\]
Similarly, \[\left(\boldsymbol{1}^TC(x(t-1))^{-1}\right)_j=\frac{\alpha_j(\phi)}{1-3x(t-1)}.\]
Since \[\left(C(x(t-1))^{-1}\boldsymbol{1}\boldsymbol{1}^TC(x(t-1))^{-1}\right)_{ij}=(C(x(t-1))^{-1}\boldsymbol{1})_{i}\left(\boldsymbol{1}^TC(x(t-1))^{-1}\right)_j,\] the assertion follows.
Finally, we claim that 
\[\sum_{i=1}^k \alpha_i(x)=k-\frac{U_{k}(x)-U_{k-1}(x)-1}{(x-1)U_{k}(x)}.\] Indeed, 
\begin{align}
\sum_{i=1}^{k}\alpha_{i}(x)	&=\frac{1}{U_{k}(x)}\sum_{i=1}^{k}(U_{k-i}(x)(U_{i}(x)-1)-U_{i-1}(x)(U_{k-i-1}+1))\nonumber\\
&=\frac{1}{U_{k}(x)}\sum_{i=1}^{k}(q_{2}(k,x)-q_{4}(k-1,x)-2(q_{0}(k-1,x)+1))\nonumber\\
&=\frac{(kT_{2}(x)-2(x+1))U_{k}(x)+2(1-(k-1)x)U_{k-1}(x)+kU_{k-2}(x)+2(x+1)}{2(x^{2}-1)}.\nonumber    
\end{align}
Replacing $U_{k-2}(x)$ with $2xU_{k-1}(x)-U_k(x)$ and $T_2(x)$ with $2x^2-1$, the assertion follows.
Putting everything together, 
\begin{align}
&F(x,t)\nonumber\\
&=1+x\mathbf{1}^TA(x,t)^{-1}\mathbf{1}\nonumber\\
&=1+x\mathbf{1}^TC(x(t-1))^{-1}\mathbf{1} + \frac{x^2\sum_{i,j=1}^k(C(x(t-1))^{-1}\boldsymbol{1}\boldsymbol{1}^{t}C(x(t-1))^{-1})_{ij}}{1-x\mathbf{1}^TC(x(t-1))^{-1}\mathbf{1}}
\nonumber\\
&=1+x\mathbf{1}^TC(x(t-1))^{-1}\mathbf{1} + \frac{x^2}{(1-3x(t-1))^{2}}\frac{\sum_{i,j=1}^k\alpha_i(\phi)\alpha_j(\phi)}{1-x\mathbf{1}^TC(x(t-1))^{-1}\mathbf{1}}
\nonumber\\
&=1+x\gamma(x,t)+\frac{x^{2}(\gamma(x,t))^{2}}{1-x\gamma(x,t)}\nonumber\\
&=\frac{1}{1-x\gamma(x,t)}.\nonumber\qedhere
\end{align} 
\end{proof}

As a special case of Theorem \ref{t1} we obtain \cite[Theorem 2.2]{K}.
\begin{corollary}
Let $D(x)$ be the generating function of the number of staircase words over $k$. Then \[D(x) = 1+\frac{x(k-(3k+2)x)}{(1-3x)^{2}}+\frac{2x^{2}}{(1-3x)^{2}}\frac{U_{k-1}\left(\frac{1-x}{2x}\right)+1}{U_k\left(\frac{1-x}{2x}\right)}.\]    
\end{corollary}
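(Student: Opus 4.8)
The plan is to obtain $D(x)$ as the specialization $F(x,t)|_{t=1}$ is not directly possible, since $\phi = (1-x(t-1))/(2x(t-1))$ has a pole at $t=1$; instead I would recognize that the number of staircase words of length $n$ is the coefficient of $t^{n-1}$ in $f_{n,k}(t)$, so $D(x) = \sum_n f_{n,k}(n-1)x^n$ is obtained by extracting the top-degree-in-$t$ part. Concretely, $D(x)$ equals the generating function of staircase words, which in matrix language is $1 + x\mathbf{1}^T B(x)^{-1}\mathbf{1}$ where $B(x)$ is the adjacency-type matrix $A(x,t)$ with $t=1$ replaced appropriately — that is, $B(x)$ is exactly the tridiagonal matrix counting only staircase transitions, namely $B(x) = I - x(\text{tridiagonal band of }1\text{s including the diagonal})$. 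But this matrix is precisely $C(-x)$ shifted: comparing with the definition of $C(x)$, one sees that the matrix governing staircase words is $C(x)$ with its off-diagonal and diagonal entries all equal to $-x$, i.e. $B(x) = C(x) - (-x)(\text{diagonal adjustment})$; more cleanly, $B(x)$ is the matrix whose inverse controls $D(x)$ and it differs from $C(\cdot)$ by exactly the rank-one correction already handled in Theorem~\ref{t1}.

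The cleanest route, which I would take, is to redo the Sherman–Morrison computation of Theorem~\ref{t1} directly in the $t=1$ limit. For staircase words, the recursion analogous to \eqref{7jv} is $d_i(x) = x + x\sum_{|i-j|\le 1} d_j(x)$, which in matrix form reads $B(x)d(x) = x\mathbf{1}$ with $B(x) = C(x) + (-x)\mathbf{1}\mathbf{1}^T$ — wait, that is not right either, since the tridiagonal part of $B$ has diagonal $1-x$ and sub/super-diagonals $-x$, and the correction $(-x)\mathbf{1}\mathbf{1}^T$ would spoil the tridiagonal entries. The correct statement is $B(x) = C(x) - (-x)(\mathbf{1}\mathbf{1}^T - \text{tridiagonal band})$; this is exactly the $t \to 1$ degeneration of $A(x,t) = C(x(t-1)) + (-x)\mathbf{1}\mathbf{1}^T$ after renormalizing. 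So I would instead argue as follows: in Theorem~\ref{t1} we derived $F(x,t) = 1/(1-x\gamma(x,t))$ with $\gamma$ explicit; the generating function $D(x)$ for staircase words is $D(x) = \lim_{t\to\infty}\, t^{?}\cdots$ — no. The honest and simplest approach: $D(x)$ is the generating function where each staircase step contributes a factor, so it is $1 + x\mathbf{1}^T (I - xN)^{-1}\mathbf{1}$ where $N$ is the $0/1$ staircase-adjacency matrix (band width $1$, including diagonal). Then $I - xN = C(x) + (-x)(\mathbf{1}\mathbf{1}^T - N') $ is messy, so instead observe $I - xN$ has diagonal $1-x$, off-diagonal-band $-x$, zero elsewhere — but that is NOT $C(x)$, whose diagonal is $1-x$ and band is $-x$ and zero elsewhere. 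In fact it IS exactly $C(x)$! So $D(x) = 1 + x\mathbf{1}^T C(x)^{-1}\mathbf{1} = 1 + x\gamma_0(x)$ where $\gamma_0(x) := \mathbf{1}^T C(x)^{-1}\mathbf{1}$, and this $\gamma_0$ was already computed inside the proof of Theorem~\ref{t1} in the guise of \eqref{e4} with the substitution $x(t-1) \mapsto x$ (equivalently $\phi \mapsto (1-x)/(2x)$): namely $\gamma_0(x) = \mathbf{1}^T C(x)^{-1}\mathbf{1}$ evaluated via the same chain of Chebyshev manipulations.

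Therefore the proof is: set $\psi = (1-x)/(2x)$ and apply the identity \eqref{e4} established in the proof of Theorem~\ref{t1}, but with the parameter $x(t-1)$ replaced throughout by $x$, to get a closed form for $\mathbf{1}^T C(x)^{-1}\mathbf{1}$ in terms of $U_k(\psi)$, $U_{k-1}(\psi)$. Then substitute into $D(x) = 1 + x\,\mathbf{1}^T C(x)^{-1}\mathbf{1}$ and simplify using $\psi - 1 = (1-3x)/(2x)$ and $U_{k-2}(\psi) = 2\psi U_{k-1}(\psi) - U_k(\psi)$; matching denominators $(1-3x)^2$ should produce exactly the claimed expression. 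I expect the only subtlety — and the main "obstacle," though it is really just bookkeeping — to be confirming that the matrix $I - xN$ for staircase words coincides with $C(x)$ as defined (so that no Sherman–Morrison correction is needed at all, the $\mathbf{1}\mathbf{1}^T$ term being precisely what the extra $t$-weighting in $A(x,t)$ introduced), and then carrying the algebraic simplification with $\psi=(1-x)/(2x)$ to land on the stated form with the $(1-3x)^2$ denominators and the $+1$ in $U_{k-1}(\psi)+1$. As a sanity check I would verify small cases, e.g. $k=2$ where every word is staircase so $D(x) = 1/(1-2x)$, against the closed form using $U_1(\psi)=2\psi = (1-x)/x$ and $U_2(\psi) = 4\psi^2-1$.
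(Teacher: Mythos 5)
Your final argument is correct, but it takes a genuinely different route from the paper. After the false starts, you land on the right observation: the transfer matrix for staircase words is exactly the tridiagonal matrix $C(x)$ (the matrix $I-xN$ with $N$ the $0/1$ band matrix is $C(x)$ on the nose), so $D(x)=1+x\mathbf{1}^{T}C(x)^{-1}\mathbf{1}$; since identity \eqref{e4} is really an identity in the single variable $y=x(t-1)$, you may substitute $y=x$ (i.e.\ $\phi\mapsto\psi=(1-x)/(2x)$) to get
$D(x)=1+\tfrac{kx}{1-3x}-\tfrac{2x^{2}}{(1-3x)^{2}}\bigl(1-\tfrac{U_{k-1}(\psi)+1}{U_{k}(\psi)}\bigr)$,
and the elementary simplification $\tfrac{kx}{1-3x}-\tfrac{2x^{2}}{(1-3x)^{2}}=\tfrac{x(k-(3k+2)x)}{(1-3x)^{2}}$ gives exactly the stated formula; your $k=2$ sanity check does yield $1/(1-2x)$. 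The paper proceeds differently: it deduces the corollary from the \emph{statement} of Theorem \ref{t1} by the coefficient-extraction substitution $D(x)=1+\frac{F(tx,1/t)-1}{tx}\big|_{t=0}$, which isolates the top degree $m=n-1$ of $t$ in $(F(x,t)-1)/x=\sum_{n\geq1}\sum_{m=0}^{n-1}f_{m,n}t^{m}x^{n-1}$. Your route is more elementary and neatly sidesteps the singularity of $\phi$ at $t=1$ that you correctly flagged, but it proves the result from an intermediate identity inside the proof of Theorem \ref{t1} (it is essentially the original Knopfmacher et al.\ derivation) rather than exhibiting it as a specialization of the theorem itself, which is the point of the corollary and is the same mechanism the paper reuses for the cyclic case. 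If you write this up, delete the exploratory detours and keep only the identification $I-xN=C(x)$ together with the substitution $y=x$ in \eqref{e4}.
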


\begin{proof}
Let $f_{m,n}$ denote the number of words $w\in[k]^n$ such that $\s(w)=m$. Thus, $D(x)= 1+\sum_{n\geq 1}f_{n-1,n}x^n$. Now, we have \[\frac{F(x,t)-1}{x}=\sum_{n\geq 1}\sum_{m=0}^{n-1}f_{m,n}t^mx^{n-1}.\] Thus, \[\frac{F\left(tx,\frac{1}{t}\right)-1}{tx}=\sum_{n\geq1}\sum_{m=0}^{n-1}f_{n,m}t^{n-1-m}x^{n-1}.\] Therefore, 
\begin{align}
D(x)&=1+\frac{F\left(tx,\frac{1}{t}\right)-1}{tx}\Bigg|_{t=0}\nonumber\\&=1+\frac{x(k-(3k+2)x)}{(1-3x)^{2}}+\frac{2x^{2}}{(1-3x)^{2}}\frac{U_{k-1}\left(\frac{1-x}{2x}\right)+1}{U_{k}\left(\frac{1-x}{2x}\right)}.\nonumber \qedhere
\end{align}
\end{proof}

Recall that the Hertzsprung's problem asks for the number of ways to arrange $n$ non-attacking kings on an $n\times n$ board, such that each row and each column contains exactly one king (see \seqnum{A002464}). This problem is clearly equivalent to the problem of finding the number of permutations of $\{1,2,\ldots,n\}$ such that consecutive numbers differ by at least $2$. We propose the name Hertzsprung for the analogue problem for words.

\begin{definition}
A word $w$ over $k$ is said to be Hertzsprung if $\s(w)=0$. We say that $w$ is cyclic Hertzsprung if $\cs(w)=0$.      
\end{definition}

\begin{corollary}
The generating function of the number of Hertzsprung words over $k$ is given by \[\left(1-\frac{kx}{1+3x}-\frac{2x^{2}}{(1+3x)^{2}}\frac{U_{k}\left(-\frac{1+x}{2x}\right)-U_{k-1}\left(-\frac{1+x}{2x}\right)-1}{U_{k}\left(-\frac{1+x}{2x}\right)}\right)^{-1}.\] 
\end{corollary}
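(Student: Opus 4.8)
The plan is to specialize Theorem~\ref{t1} in exactly the same way the previous corollary specialized it, but now extracting the coefficient of $t^0$ in the variable counting \emph{non}-variations. Recall that $\s(w)$ counts the indices $i$ with $|w_{i+1}-w_i|\le 1$, so a Hertzsprung word is one with $\s(w)=0$; among words of length $n$ there are $n-1$ consecutive pairs, so the number of ``bad'' pairs (those with $|w_{i+1}-w_i|\ge 2$) is $n-1-\s(w)$. First I would write, with $f_{m,n}=\#\{w\in[k]^n:\s(w)=m\}$,
\[
\frac{F(x,t)-1}{x}=\sum_{n\ge 1}\sum_{m=0}^{n-1}f_{m,n}\,t^{m}x^{n-1},
\]
so that the generating function $H(x)$ of Hertzsprung words satisfies
\[
H(x)=1+\sum_{n\ge 1}f_{0,n}x^{n}=1+x\cdot\Big[\,t^{0}\Big]\frac{F(x,t)-1}{x}
=1+\big(F(x,t)-1\big)\big|_{t=0}=F(x,0).
\]
Thus the entire task reduces to evaluating $F(x,0)$ using the closed form in Theorem~\ref{t1}.

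Next I would substitute $t=0$ into the formula $F(x,t)=1/(1-x\gamma(x,t))$. At $t=0$ we have $x(t-1)=-x$, so $\phi=(1-x(t-1))/(2x(t-1))$ becomes $\phi|_{t=0}=(1+x)/(-2x)=-\frac{1+x}{2x}$, and $1-3x(t-1)=1+3x$. Plugging these into
\[
\gamma(x,t)=\frac{k}{1-3x(t-1)}-\frac{2x(t-1)}{(1-3x(t-1))^{2}}\cdot\frac{U_{k}(\phi)-U_{k-1}(\phi)-1}{U_{k}(\phi)}
\]
gives
\[
\gamma(x,0)=\frac{k}{1+3x}+\frac{2x}{(1+3x)^{2}}\cdot\frac{U_{k}\!\left(-\frac{1+x}{2x}\right)-U_{k-1}\!\left(-\frac{1+x}{2x}\right)-1}{U_{k}\!\left(-\frac{1+x}{2x}\right)}.
\]
Therefore
\[
H(x)=F(x,0)=\frac{1}{1-x\gamma(x,0)}
=\left(1-\frac{kx}{1+3x}-\frac{2x^{2}}{(1+3x)^{2}}\cdot\frac{U_{k}\!\left(-\frac{1+x}{2x}\right)-U_{k-1}\!\left(-\frac{1+x}{2x}\right)-1}{U_{k}\!\left(-\frac{1+x}{2x}\right)}\right)^{-1},
\]
which is exactly the claimed expression.

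There is essentially no obstacle here: the only thing to be careful about is the bookkeeping of the substitution $t\mapsto 0$ versus the $t\mapsto 1/t$, $x\mapsto tx$ trick used in the staircase corollary — for the Hertzsprung count we want $m=0$ directly, so we genuinely just set $t=0$ in $F(x,t)$ rather than performing the reciprocal substitution. One should also note that $\phi|_{t=0}=-\frac{1+x}{2x}$ is a legitimate argument for the Chebyshev polynomials (they are polynomials, so no convergence issue), and that the sign flip $2x(t-1)\mapsto -2x$ propagates correctly through both the $k/(1-3x(t-1))$ term and the squared-denominator term, turning the minus sign in $\gamma$ into a plus sign in $\gamma(x,0)$. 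With these remarks the identification with Theorem~\ref{t1} is immediate and the proof is a one-line evaluation.
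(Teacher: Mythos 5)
Your proof is correct and is exactly the paper's approach: the paper likewise obtains the result by evaluating $F(x,0)$ from Theorem~\ref{t1}, and your substitutions $\phi|_{t=0}=-\tfrac{1+x}{2x}$ and $1-3x(t-1)|_{t=0}=1+3x$ reproduce the stated formula. Your additional remark that one sets $t=0$ directly (rather than using the $x\mapsto tx$, $t\mapsto 1/t$ trick of the staircase corollary) is a correct and worthwhile clarification.
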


\begin{proof}
The assertion follows immediately from evaluating $F(x,0)$. 
\end{proof}

In the following theorem we calculate $G(x,t)$. To the best of our knowledge, it has not been previously studied, except in the special case of cyclic staircase words by \cite[Theorem 3.2]{K}.

\begin{theorem}\label{t2}
We have 
\begin{align}
&G(x,t)=1+\frac{1+3x(t-1)}{(1-3x(t-1))(1+x(t-1))}\vast[\frac{1}{1-x\gamma(x,t)}\nonumber\\&-\frac{2x(t-1)(k+1)}{(1+3x(t-1))U_{k}\left(\frac{1-x(t-1)}{2x(t-1)}\right)}\left(\frac{1-x\frac{1+k-U_{k}\left(\frac{1-x(t-1)}{2x(t-1)}\right)}{1-3x(t-1)}}{1-x\gamma(x,t)}+U_{k-1}\left(\frac{1-x(t-1)}{2x(t-1)}\right)-1\right)+kx(t-1)-1\vast].\nonumber
\end{align}
\end{theorem}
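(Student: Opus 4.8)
The plan is to express $G(x,t)$ as a single matrix trace and then evaluate it by the same Sherman--Morrison decomposition used to prove Theorem~\ref{t1}, the one genuinely new ingredient being a closed form for a quadratic expression in the vector $C(x(t-1))^{-1}\mathbf{1}$. To begin, let $B=B(x,t)$ be the $k\times k$ matrix with $(B)_{ij}=xt^{\mathds{1}_{|i-j|\leq1}}$, so that $A(x,t)=I-B$ is precisely the matrix of Theorem~\ref{t1}. For each $n\geq1$ we have $x^ng_{n,k}(t)=\operatorname{tr}(B^n)$, since expanding $\operatorname{tr}(B^n)=\sum_{w_1,\dots,w_n}(B)_{w_1w_2}(B)_{w_2w_3}\cdots(B)_{w_nw_1}$ reproduces $\sum_{w\in[k]^n}x^n\prod_i t^{\mathds{1}_{|w_{i+1}-w_i|\leq1}}$ with the indices read cyclically (for $n=1$ this reads $\operatorname{tr}(B)=ktx$, consistent with $\cs((a))=1$). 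Summing over $n\geq1$ and adding the empty word, as formal power series in $x$,
\[
G(x,t)=1+\operatorname{tr}\!\Big(\sum_{n\geq1}B^n\Big)=1+\operatorname{tr}\big((I-B)^{-1}-I\big)=1-k+\operatorname{tr}\big(A(x,t)^{-1}\big),
\]
so everything reduces to computing $\operatorname{tr}\big(A(x,t)^{-1}\big)$.

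Next, write $A(x,t)=C(x(t-1))-x\mathbf{1}\mathbf{1}^T$ exactly as in the proof of Theorem~\ref{t1}, abbreviate $C=C(x(t-1))$, and take the trace in \eqref{e3}:
\[
\operatorname{tr}\big(A^{-1}\big)=\operatorname{tr}\big(C^{-1}\big)+\frac{x\,\mathbf{1}^TC^{-2}\mathbf{1}}{1-x\,\mathbf{1}^TC^{-1}\mathbf{1}}.
\]
The denominator equals $1-x\gamma(x,t)$ by \eqref{e4}, i.e.\ it is $1/F(x,t)$. For the first term, specializing the formula of Knopfmacher et al.\ for $(C^{-1})_{ij}$ to $i=j$,
\[
\operatorname{tr}\big(C^{-1}\big)=\frac{1}{x(t-1)U_k(\phi)}\sum_{i=1}^kU_{i-1}(\phi)U_{k-i}(\phi)=\frac{q_4(k,\phi)}{x(t-1)U_k(\phi)},
\]
and inserting the closed form of $q_4$ and the identities $2x(t-1)\phi=1-x(t-1)$, $\phi-1=\tfrac{1-3x(t-1)}{2x(t-1)}$, $\phi+1=\tfrac{1+x(t-1)}{2x(t-1)}$ turns this into a rational function of $x,t$ and $U_k(\phi),U_{k-1}(\phi)$. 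For the second term, recall from the proof of Theorem~\ref{t1} that $(C^{-1}\mathbf{1})_i=\alpha_i(\phi)/(1-3x(t-1))$; since $C$ is symmetric this gives
\[
\mathbf{1}^TC^{-2}\mathbf{1}=(C^{-1}\mathbf{1})^T(C^{-1}\mathbf{1})=\sum_{i=1}^k\big(C^{-1}\mathbf{1}\big)_i^2=\frac{1}{(1-3x(t-1))^2}\sum_{i=1}^k\alpha_i(\phi)^2 .
\]

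It then remains to evaluate $\sum_{i=1}^k\alpha_i(x)^2$ in closed form and to assemble the pieces. Writing $U_k(x)\alpha_i(x)=U_{k-i}(x)U_i(x)-U_{k-i}(x)-U_{i-1}(x)U_{k-i-1}(x)-U_{i-1}(x)$ and squaring, the sum $\sum_{i=1}^k\alpha_i(x)^2$ splits into ten sums over $i$, each of which --- up to sign and a factor $2$ --- is one of the Chebyshev sums listed in the Preliminaries: $q_{16}$ and $q_{17}$ (the squares of the two products), $q_{12}$, $q_{18}$ and $q_7$ (the cross terms involving $U_{k-i}U_i$), $q_9(k,x)=\sum_i U_{k-i}(x)U_{k-i-1}(x)U_{i-1}(x)=q_7(k-1,x)$, $q_{14}$, $q_4$, and $q_1$ (entering twice, via $q_{26}=q_{27}=q_1(k-1,\cdot)+1$). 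Substituting those closed forms, clearing the common denominator, and reducing $U_{k-2}(x),U_{k+1}(x),\dots$ to $U_k(x)$ and $U_{k-1}(x)$ by \eqref{u1} expresses $\sum_i\alpha_i(x)^2$ as an explicit rational function of $x,U_k(x),U_{k-1}(x)$. Finally, one sets $x=\phi$, uses the identities for $\phi\pm1$ above together with $\sum_i\alpha_i(\phi)=(1-3x(t-1))\gamma(x,t)$ (immediate from $\mathbf{1}^TC^{-1}\mathbf{1}=\sum_i(C^{-1}\mathbf{1})_i$) and $F(x,t)=1/(1-x\gamma(x,t))$, combines with $\operatorname{tr}(C^{-1})$ and the constant $1-k$ from the first step, and regroups everything over the common denominator $(1-3x(t-1))(1+x(t-1))$ to arrive at the stated formula.

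The main obstacle is this last step: $\sum_i\alpha_i(x)^2$ is a sum of products of \emph{four} Chebyshev polynomials, so it is precisely here that the long list $q_7,q_{12},q_{14},q_{16},q_{17},q_{18}$ compiled in the Preliminaries is needed; and the ensuing simplification --- carrying the $(1\pm x(t-1))$ and $(1\pm3x(t-1))$ factors through so that the spurious denominators produced by the substitution $x=\phi$ all cancel, leaving only the clean denominator of the theorem --- is where sign and bookkeeping errors are most likely to creep in.
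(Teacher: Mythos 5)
Your proposal is correct, and it takes a genuinely different — and leaner — route than the paper. The paper sets up a refined generating function $g_{i,j}(x,t)$ for each pair of first/last letters, solves a block system of $k$ copies of $A(x,t)$, and then must evaluate $\sum_i u_i(t)^TA(x,t)^{-1}u_i(t)$ by classifying the index pairs $(j,s)$ into three multisets according to $|i-j|\leq 1$ and $|i-s|\leq 1$; this turns the computation into a large collection of weighted partial sums of entries of $A(x,t)^{-1}$ and is the reason the Preliminaries list runs all the way to $q_{41}$. You instead invoke the transfer-matrix identity $x^ng_{n,k}(t)=\operatorname{tr}(B^n)$ with $(B)_{ij}=xt^{\mathds{1}_{|i-j|\leq1}}$ — which correctly accounts for the wrap-around condition $|w_1-w_n|\leq1$ and checks out at $n=1$ — to get $G(x,t)=1-k+\operatorname{tr}(A(x,t)^{-1})$ in one line. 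Combined with Sherman--Morrison and the symmetry of $C$, this reduces everything to $\operatorname{tr}(C^{-1})=q_4(k,\phi)/(x(t-1)U_k(\phi))$ and $\mathbf{1}^TC^{-2}\mathbf{1}=\sum_i\alpha_i(\phi)^2/(1-3x(t-1))^2$, and your expansion of $\big(U_k(x)\alpha_i(x)\big)^2$ into the ten sums $q_{16},q_{17},q_{12},q_{18},q_7,q_9,q_{14},q_4$ and $q_1$ (twice) is complete and matches the identities available in the Preliminaries. What your approach buys is a substantial reduction in bookkeeping: only a handful of the $q$-sums are needed, and the multiset/matrix case analysis ($X_1,X_2,X_3$) disappears entirely. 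Like the paper, you stop short of carrying out the final rational-function simplification, but the plan is sound and every closed form you need is already on the table.
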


\begin{proof}
For $i,j\in[k]$ let $g_{i,j}(x,t)$ be the analogue of $G(x,t)$ for words whose first and last letters are $i$ and $j$, respectively.   We have
\begin{equation}\label{7ja}
g_{i,j}(x,t)=t^{2\cdot \mathds{1}_{|i-j|\leq 1}}x^{2}+x\sum_{s=1}^{k}t^{\mathds{1}_{|i-j|\leq 1}+\mathds{1}_{|j-s|\leq 1}-\mathds{1}_{|i-s|\leq 1}}g_{i,s}(x,t).
\end{equation}
Dividing both sides of \eqref{7ja} by $t^{\mathds{1}_{|i-j|\leq 1}}$ and setting $h_{i,j}(x,t)=g_{i,j}(x,t)/t^{\mathds{1}_{|i-i|\leq 1}}$, we may rewrite \eqref{7ja} as
\begin{equation}\label{906}
h_{i,j}(x,t)=t^{\mathds{1}_{|i-j|\leq 1 }}x^{2}+x\sum_{s=1}^{k}t^{\mathds{1}_{|j-s|\leq 1}}h_{i,s}(x,t).
\end{equation}
Let us order the pairs $(i,j)\in [k]^2$ lexicographically by $$(1,1) < (1,2) <\cdots < (1,k) < (2,1) < \cdots < (k,k).$$ In matrix form \eqref{906} may be written as $M(x,t)h(x,t) = x^2v(t)$, where $M(x,t)$ is the block diagonal matrix consisting of $k$ copies of $A(x,t)$, $h(x,t)=(h_{1,1}(x,t),\ldots,h_{k,k}(x,t))^T$,
and $v(t)=(v_{1,1}(t),\ldots,v_{k,k}(t))^T$ with $v_{i,j}(t) = t^{\mathds{1}_{|i-j|\leq 1}}$. Finally, for $i\in[k]$ we set $u_i(t) = (v_{i, 1}(t),\ldots, v_{i, k}(t))^T$. Then 
\begin{align}
&G(x,t) \nonumber\\&
= 1+ktx+x^2\sum_{i,j=1}^k g_{i,j}(x,t) \nonumber\\
&=1+ktx+x^2v(t)^Th(x,t)\nonumber\\
&=1+ktx+x^2v(t)^TM(x,t)^{-1}v(t)\nonumber\\
&=1+ktx+x^{2}\sum_{i=1}^{k}u_i(t)^{T}A(x,t)^{-1}u_i(t)\nonumber\\
&=1+ktx+x^{2}\sum_{i=1}^{k}\sum_{j=1}^{k}v_{i,j}(t)\sum_{s=1}^{k}(A(x,t)^{-1})_{js}v_{is}(t)\nonumber\\
&=1+ktx+x^{2}\sum_{i=1}^{k}\left(t^{2}\sum_{\substack{|i-j|\leq1\\|i-s|\leq 1}}(A(x,t)^{-1})_{js}+2t\sum_{\substack{|i-j|\leq1\\|i-s|>1}}(A(x,t)^{-1})_{js}+\sum_{\substack{|i-j|>1\\|i-s|>1}}(A(x,t)^{-1})_{js}\right).\nonumber
\end{align}
Consider now the three multisets (i.e., repetition matters)
\begin{align}
X_1 & = \{(j,s)\;:\; i,j,s\in[k], |i-j|\leq 1 \textnormal{ and }    |i-s|\leq 1\}\nonumber \\
X_2 & = \{(j,s)\;:\; i,j,s\in[k], |i-j|\leq 1 \textnormal{ and }    |i-s|> 1\}\nonumber \\
X_3 & = \{(j,s)\;:\; i,j,s\in[k], |i-j|> 1 \textnormal{ and }    |i-s|> 1\}.\nonumber 
\end{align}
Each of these multisets may be represented by a matrix (that we denote by the same name as the multiset), whose $js$th entry corresponds to the multiplicity of the element $(j,s)$ in the multiset. The matrices are defined as follows (see Example \ref{e1}):
\begin{align}
(X_1)_{js}&=
\begin{cases}
3, &\textnormal{if } 2\leq  s=j\leq k-1  \\ 
2, &\textnormal{if } (j,s)\in\{(1,1),(k,k)\} \textnormal{ or } s= j \pm 1\\
1, &\textnormal{if } s= j \pm 2 \\
0, &\textnormal{otherwise}.
\end{cases} \nonumber\\
(X_2)_{js}&=
\begin{cases}
0, &\textnormal{if } (j,s)\in\{(1,2) ,(k,k-1)\} \textnormal{ or } s=j\\
1, &\textnormal{if } (j,s)\in\{(1,3),(k,k-2)\} \textnormal{ or } 3\leq s=j+1 \textnormal{ or } 3\leq s=j-1\leq k-2\\
2, &\textnormal{if } (j=1 \textnormal{ and } 4\leq s\leq k) \textnormal{ or } (j=k \textnormal{ and } 1\leq s\leq k-3) \\& \textnormal{or } 4 \leq s = j + 2 \textnormal{ or } s = j - 2\leq k-3\\
3, &\textnormal{otherwise}.
\end{cases} \nonumber\\
(X_3)_{js}&=
\begin{cases}
k-2, &\textnormal{if } (j,s)\in\{(1,1),(k,k)\}\\
k-3, &\textnormal{if } (j,s)\in\{(1,2),(2,1),(k-1,k), (k,k-1)\} \textnormal{ or } 2\leq j=s\leq k-1\\
k-4, &\textnormal{if } (j,s)\in\{(1,3),(3,1),(k-2,k), (k,k-2), (1,k), (k, 1)\} \\&\textnormal{or } 3\leq s=j+1\leq k-1 \textnormal{ or } 2\leq s=j-1\leq k-2\\
k-5, &\textnormal{if } (j=1 \textnormal{ and } 4\leq s\leq k-1) \textnormal{ or } (j=k \textnormal{ and } 2\leq s\leq k-3) \\
& \textnormal{or } (s=1 \textnormal{ and } 4\leq j\leq k-1)\textnormal{ or } (s=k \textnormal{ and } 2\leq j\leq k-3) \\
&\textnormal{or } 4\leq s=j+2\leq k-1 \textnormal{ or } 2\leq s=j-2\leq k-3\\
k-6, &\textnormal{otherwise}.
\end{cases} \nonumber
\end{align}
\begin{align}
&=1+ktx+x^{2}(t^{2}-2t)\Bigg(3\sum_{i=1}^{k}(A(x,t)^{-1})_{ii}+4\sum_{i=1}^{k-1}(A(x,t)^{-1})_{i,i+1}\nonumber\\&\hspace{217pt}+2\sum_{i=1}^{k-2}(A(x,t)^{-1})_{i,i+2}-2(A(x,t)^{-1})_{11}\Bigg)\nonumber\\
&+x^{2}\Bigg((6t+k-3)\mathbf{1}^{T}A(x,t)^{-1}\mathbf{1}-(4t-4)\sum_{i=1}^{k}(A(x,t)^{-1})_{1i}\nonumber\\&\hspace{180pt}-2\sum_{z=0}^{2}\sum_{i=1}^{k-1}\sum_{j=i+1+z}^{k}(A(x,t)^{-1})_{ij}-2(A(x,t)^{-1})_{11}\Bigg).\nonumber
\end{align}
At this point we use \eqref{e3}, \eqref{e4}, and \eqref{e5} and subsequently the identities for $q_0(k,x),\ldots, q_{41}(k,x)$. Then, after tedious algebraic manipulations, we arrive at the asserted formula.
\end{proof}

\begin{example}\label{e1}
The following three matrices correspond to the three multisets defined in the proof of Theorem \ref{t2}, for $k=8$.
\begin{align}
X_1&=
\begin{pmatrix}
2& 2& 1& 0& 0& 0& 0& 0\\
2& 3& 2& 1& 0& 0& 0& 0\\
1& 2& 3& 2& 1& 0& 0& 0\\
0& 1& 2& 3& 2& 1& 0& 0\\
0& 0& 1& 2& 3& 2& 1& 0\\
0& 0& 0& 1& 2& 3& 2& 1\\
0& 0& 0& 0& 1& 2& 3& 2\\
0& 0& 0& 0& 0& 1& 2& 2   
\end{pmatrix} \nonumber\\
X_2&=\begin{pmatrix}
0& 0& 1& 2& 2& 2& 2& 2\\
 1& 0& 1& 2& 3& 3& 3& 3\\
 2& 1& 0& 1& 2& 3& 3& 3\\
 3& 2& 1& 0& 1& 2& 3& 3\\
 3& 3& 2& 1& 0& 1& 2& 3\\
 3& 3& 3& 2& 1& 0& 1& 2\\
 3& 3& 3& 3& 2& 1& 0& 1\\
 2& 2& 2& 2& 2& 1& 0& 0  
\end{pmatrix}\nonumber\\
X_3&=\begin{pmatrix}
6& 5& 4& 3& 3& 3& 3& 4\\
 5& 5& 4& 3& 2& 2& 2& 3\\
 4& 4& 5& 4& 3& 2& 2& 3\\
 3& 3& 4& 5& 4& 3& 2& 3\\
 3& 2& 3& 4& 5& 4& 3& 3\\
 3& 2& 2& 3& 4& 5& 4& 4\\
 3& 2& 2& 2& 3& 4& 5& 5\\
 4& 3& 3& 3& 3& 4& 5& 6
\end{pmatrix}\nonumber
\end{align}
\end{example}

As a special case of Theorem \ref{t2} we obtain \cite[Theorem 3.2]{K}.
\begin{corollary}
Let $E(x)$ be the generating function of the number of cyclic staircase words over $k$. Then \[E(x) = 1+\frac{kx(1+3x)}{(1+x)(1-3x)}-\frac{2(k+1)xU_{k-1}\left(\frac{1-x}{2x}\right)}{(1+x)(1-3x)U_{k}\left(\frac{1-x}{2x}\right)}.\]    
\end{corollary}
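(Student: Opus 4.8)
The plan is to derive $E(x)$ as the specialization of $G(x,t)$ that extracts, for each $n$, the coefficient of the fully cyclic-staircase term, i.e.\ the words $w\in[k]^n$ with $\cs(w)=n$. Concretely, if $g_{m,n}$ denotes the number of words $w\in[k]^n$ with $\cs(w)=m$, then $E(x)=1+\sum_{n\ge 1}g_{n,n}x^n$, and
\begin{align}
\frac{G(x,t)-1-ktx}{x^2}&=\sum_{n\ge 2}\sum_{m=0}^{n}g_{m,n}t^m x^{n-2}.\nonumber
\end{align}
(The $ktx$ term accounts for the $n=1$ contribution, where the single letter is trivially cyclic staircase.) Substituting $x\mapsto tx$ and $t\mapsto 1/t$ turns $t^m x^{n-2}$ into $t^{n-2-m}x^{n-2}$, so that the cyclic-staircase words ($m=n$) appear with $t^{-2}$; multiplying by $t^2$ and letting $t\to 0$ isolates exactly those words. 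Thus I would claim
\begin{align}
E(x)&=1+t^2\cdot\frac{G\!\left(tx,\tfrac1t\right)-1-kx}{(tx)^2}\Bigg|_{t=0}=1+\frac{G\!\left(tx,\tfrac1t\right)-1-kx}{x^2}\Bigg|_{t=0},\nonumber
\end{align}
paralleling exactly the argument used in the corollary deriving $D(x)$ from Theorem~\ref{t1}.

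The second step is to carry out this substitution in the closed form of Theorem~\ref{t2}. Writing $t-1\mapsto \tfrac1t-1=\tfrac{1-t}{t}$, one has $x(t-1)\mapsto tx\cdot\tfrac{1-t}{t}=x(1-t)$, which tends to $x$ as $t\to 0$; hence $1-3x(t-1)\mapsto 1-3x(1-t)\to 1-3x$, $1+x(t-1)\mapsto 1+x(1-t)\to 1+x$, and the Chebyshev argument $\phi=(1-x(t-1))/(2x(t-1))\mapsto (1-x(1-t))/(2x(1-t))\to (1-x)/(2x)$. I would also need the $t\to 0$ limit of $x\gamma(x,t)$; by Theorem~\ref{t1} (or directly) this is exactly the quantity appearing in the Hertzsprung-type corollaries, and in the present normalization $1/(1-x\gamma(x,t))\big|_{t\to 0}\to 1$ after accounting for the $x^2$ denominator, since $F(x,t)-1=x\mathbf1^TA^{-1}\mathbf1$ is $O(x)$, not $O(x^2)$. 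Care is needed here: several terms in the bracket of Theorem~\ref{t2} are individually $O(1)$ as $t\to 0$ but must combine with the prefactor $\frac{1+3x(t-1)}{(1-3x(t-1))(1+x(t-1))}$ and the overall $1/x^2$ so that the $O(1)$ and $O(x)$ pieces cancel and leave a power series starting at $x^0$. So the bookkeeping is to expand everything to the needed order in $t$, collect, and simplify using $U_{k+1}(\phi)=2\phi U_k(\phi)-U_{k-1}(\phi)$ to reduce $U_{k+1},U_{k-2}$ down to $U_k,U_{k-1}$ just as in the proof of Theorem~\ref{t1}.

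The expected main obstacle is precisely this cancellation: in the raw specialization the factor $1/x^2$ multiplies an expression whose leading terms (the constant and the linear-in-$x$ parts) must vanish, and verifying this by hand requires tracking the expansions of $1/(1-x\gamma(x,t))$, of $U_k((1-x(1-t))/(2x(1-t)))$, and of the bracketed combination simultaneously. I would organize the computation by first substituting and setting $t=0$ inside each Chebyshev argument and each rational prefactor (which is straightforward, giving arguments $(1-x)/(2x)$ throughout), then separately computing $\lim_{t\to 0}\bigl(G(tx,1/t)-1-kx\bigr)$ together with the first two orders of its $x$-expansion to confirm it is $O(x^2)$, and only then dividing by $x^2$. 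Once the dust settles, the $k$-dependent rational part should collapse to $\frac{kx(1+3x)}{(1+x)(1-3x)}$ and the Chebyshev part to $-\frac{2(k+1)xU_{k-1}((1-x)/(2x))}{(1+x)(1-3x)U_k((1-x)/(2x))}$; as a sanity check one can compare against \cite[Theorem 3.2]{K} directly, or verify small cases $k=2,3$ by hand.
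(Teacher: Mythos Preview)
Your underlying idea---substitute $x\mapsto tx$, $t\mapsto 1/t$, then set $t=0$---is exactly the paper's approach, and your computation of how each building block behaves under this substitution (the Chebyshev argument tending to $(1-x)/(2x)$, etc.) is correct. However, the displayed extraction formula is wrong: you claim
\[
E(x)=1+\frac{G\!\left(tx,\tfrac1t\right)-1-kx}{x^2}\Bigg|_{t=0},
\]
but the right-hand side equals $1+\sum_{n\ge 2}g_{n,n}x^{n-2}$, not $E(x)=1+kx+\sum_{n\ge 2}g_{n,n}x^{n}$. You have divided by $x^2$ and forgotten to multiply back (and dropped the $kx$ term). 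Once you repair this you get simply $E(x)=G(tx,1/t)\big|_{t=0}$, which is precisely the paper's one-line derivation: since $G(tx,1/t)=\sum_{n\ge 0}\sum_{m=0}^{n}g_{m,n}t^{n-m}x^{n}$ has only nonnegative powers of $t$, setting $t=0$ directly isolates $m=n$.

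This matters because your entire ``expected main obstacle''---the delicate cancellation of the $O(1)$ and $O(x)$ pieces after dividing by $x^2$---is an artifact of the erroneous formula. With the correct specialization $E(x)=G(tx,1/t)|_{t=0}$ there is nothing to cancel: every occurrence of $x(t-1)$ becomes $x(1-t)\to x$, the prefactor becomes $\frac{1+3x}{(1-3x)(1+x)}$, and the bracket simplifies directly (noting that $x\gamma(x,t)\to 0$ so $1/(1-x\gamma)\to 1$). The result then drops out with the same routine Chebyshev reductions you already described.
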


\begin{proof}
Let $g_{m,n}$ denote the number of words $w\in[k]^n$ such that $\cs(w)=m$. Thus, $E(x)= \sum_{n\geq 0}g_{n,n}x^n$. Now, we have \[G(x,t)=\sum_{n\geq 0}\sum_{m=0}^{n}g_{m,n}t^mx^n.\] Thus, \[G\left(tx,\frac{1}{t}\right)=\sum_{n\geq 0}\sum_{m=0}^ng_{n,m}t^{n-m}x^n.\] Therefore, 
\begin{align}
E(x)&=G\left(tx,\frac{1}{t}\right)\Bigg|_{t=0}\nonumber\\&=1+\frac{kx(1+3x)}{(1+x)(1-3x)}-\frac{2(k+1)xU_{k-1}\left(\frac{1-x}{2x}\right)}{(1+x)(1-3x)U_{k}\left(\frac{1-x}{2x}\right)}.\nonumber\qedhere
\end{align}
\end{proof}

\begin{corollary}
The generating function of the number of cyclic Hertzsprung words over $k$ is given by 
\begin{align}
1&+\frac{1-3x}{(1+3x)(1-x)}\vast[\frac{1}{1-x\gamma(x,0)}\nonumber\\&+\frac{2x(k+1)}{(1-3x)U_{k}\left(-\frac{1+x}{2x}\right)}\left(\frac{1-x\frac{1+k-U_{k}\left(-\frac{1+x}{2x}\right)}{1+3x}}{1-x\gamma(x,0)}+U_{k-1}\left(-\frac{1+x}{2x}\right)-1\right)-kx-1\vast].\nonumber    
\end{align}
\end{corollary}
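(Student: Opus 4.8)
The plan is to obtain the generating function for cyclic Hertzsprung words as the specialization of Theorem~\ref{t2} at the point that extracts the ``no cyclic variations'' term, exactly as in the preceding corollary for cyclic staircase words. Recall that $G(x,t) = \sum_{n\geq 0}\sum_{m=0}^n g_{m,n}t^m x^n$, where $g_{m,n}$ counts words $w\in[k]^n$ with $\cs(w)=m$. A cyclic Hertzsprung word of length $n$ is one with $\cs(w)=0$, so if $H(x)$ denotes the sought generating function, then $H(x) = \sum_{n\geq 0} g_{0,n}x^n = G(x,0)$. Thus the entire task reduces to substituting $t=0$ into the formula of Theorem~\ref{t2} and simplifying.

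First I would carry out the substitution $t=0$ termwise. With $t=0$ we have $x(t-1) = -x$, so $\phi = (1-x(t-1))/(2x(t-1)) = (1+x)/(-2x) = -(1+x)/(2x)$, which matches the argument $-\tfrac{1+x}{2x}$ of the Chebyshev polynomials appearing in the claimed formula. The prefactor becomes $\frac{1+3x(t-1)}{(1-3x(t-1))(1+x(t-1))}\big|_{t=0} = \frac{1-3x}{(1+3x)(1-x)}$, again matching. Inside the large bracket, $\frac{1}{1-x\gamma(x,t)}$ becomes $\frac{1}{1-x\gamma(x,0)}$; the coefficient $-\frac{2x(t-1)(k+1)}{(1+3x(t-1))U_k(\phi)}$ becomes $-\frac{-2x(k+1)}{(1-3x)U_k(-\frac{1+x}{2x})} = +\frac{2x(k+1)}{(1-3x)U_k(-\frac{1+x}{2x})}$, which explains the sign flip relative to Theorem~\ref{t2}; inside the inner parenthesis $\frac{1-x\frac{1+k-U_k(\phi)}{1-3x(t-1)}}{1-x\gamma(x,t)}$ becomes $\frac{1-x\frac{1+k-U_k(-\frac{1+x}{2x})}{1+3x}}{1-x\gamma(x,0)}$, and $U_{k-1}(\phi)-1$ becomes $U_{k-1}(-\frac{1+x}{2x})-1$; finally $kx(t-1)-1$ becomes $-kx-1$. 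Collecting these pieces reproduces the asserted expression verbatim, so essentially the proof is the single sentence ``The assertion follows from evaluating $G(x,0)$,'' as in the analogous corollaries.

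The only point requiring a word of care is justifying the identity $H(x) = G(x,0)$ as an identity of formal power series, namely that setting $t=0$ in $G(x,t) = \sum_{n,m} g_{m,n}t^m x^n$ legitimately extracts the $m=0$ coefficients; this is immediate because for each fixed power $x^n$ the coefficient is the polynomial $\sum_{m=0}^n g_{m,n}t^m$ in $t$, whose value at $t=0$ is $g_{0,n}$. One should also note in passing that the expression of Theorem~\ref{t2} is a rational function of $x$ and $t$ (once the Chebyshev polynomials are expanded, $U_k(\phi)$ and the like are rational in $x$ and $t$) that is regular at $t=0$ for $x$ a formal variable, so the substitution is unproblematic; equivalently one may follow the pattern of the cyclic-staircase corollary and pass through $G(tx,1/t)|_{t=0}$, but the direct substitution $t=0$ is cleaner here since Hertzsprung corresponds to the $t^0$ term rather than the top term. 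I do not anticipate any genuine obstacle: the content is entirely in Theorem~\ref{t2}, and this corollary is a mechanical specialization.

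\begin{proof}
The assertion follows immediately from evaluating $G(x,0)$, upon noting that the generating function of cyclic Hertzsprung words over $k$ is $\sum_{n\geq 0}g_{0,n}x^n = G(x,0)$, where $g_{m,n}$ is the number of words $w\in[k]^n$ with $\cs(w)=m$.
\end{proof}
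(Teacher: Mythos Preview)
Your proposal is correct and takes essentially the same approach as the paper, which likewise reduces the corollary to the single observation that the generating function of cyclic Hertzsprung words is $G(x,0)$. Your additional remarks about the substitution being well-defined and the termwise verification are harmless elaborations of what the paper compresses into one sentence.
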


\begin{proof}
The assertion follows immediately from evaluating $G(x,0)$. 
\end{proof}

\begin{table}[H]
\begin{center}
\begin{tabular}{|c|c|}\hline
$k$ & Generating function \\\hline
$2$ & $1$\\[4pt]
$3$ & $-\frac{x^{2}+1}{(x-1)(x+1)}$\\[4pt]
$4$ & $\frac{-3x^{4}+3x^{2}+1}{(x^{2}-x-1)(x^{2}+x-1)}$\\[4pt]
$5$ & $\frac{-12x^{4}+4x^{3}+6x^{2}+1}{4x^{4}-2x^{3}-6x^{2}+1}$\\[4pt]
\hline
\end{tabular}
\caption{The generating function of the number of cyclic Hertzsprung words over $k$, for $k=2,3,4,$ and $5$.}\label{tab2}
\end{center}
\end{table}

\end{document}